\newtheorem{theorem}{Theorem}
\newtheorem{definition}{Definition}
\newtheorem{lemma}{Lemma}
\newcommand{\R}{\mathbb{R}}
\newcommand{\D}{\mathcal{D}}
\newtheorem{rem}{Remark}
\newtheorem{exam}{Example}
\crefname{rem}{Remark}{Remarks}
\crefname{assum}{Assumption}{Assumptions}
\crefname{prop}{Proposition}{Propositions}
\crefname{propy}{Property}{Properties}
\crefname{cor}{Corollary}{Corollaries}
\crefname{lem}{Lemma}{Lemmas}
\crefname{section}{Section}{Sections}
\crefname{thm}{Theorem}{Theorems}
\crefname{figure}{Fig.}{Fig.}
\Crefname{figure}{Figure}{Figures}
\crefname{equation}{}{}
\begin{document}

\title{
Geometric stabilization of virtual linear nonholonomic constraints}

\author{Alexandre Anahory Simoes$^{3}$,  Anthony Bloch$^{2}$,  Leonardo Colombo$^{3}$ and Efstratios Stratoglou$^{4}$$^{*}$

\thanks{A. Anahory Simoes is with the School of Science and Technology, IE University, Spain. (e-mail: alexandre.anahory@ie.edu).}
\thanks{A. Bloch is with Department of Mathematics, University of Michigan, Ann Arbor, MI 48109, USA. (e-mail: abloch@umich.edu).}
\thanks{L. Colombo is with Centre for Automation and Robotics (CSIC-UPM), Ctra. M300 Campo Real, Km 0,200, Arganda del Rey - 28500 Madrid, Spain. (e-mail: leonardo.colombo@csic.es).}
\thanks{E. Stratoglou is with Universidad Polit\'ecnica de Madrid (UPM), Jos\'e Guti\'errez Abascal, 2, 28006 Madrid, Spain. (e-mail: ef.stratoglou@alumnos.upm.es).}
         
%%}
\thanks{$^*$ E. Stratoglou is the corresponding author.} 
\thanks{The authors acknowledge financial support from Grant PID2022-137909NB-C21 funded by MCIN/AEI/ 10.13039/501100011033. A.B. was partially supported by NSF grant  DMS-2103026, and AFOSR grants FA
9550-22-1-0215 and FA 9550-23-1-0400.}}

\maketitle

\begin{abstract}
 In this paper, we give sufficient conditions for and deduce a control law under which a mechanical control system converges exponentially fast to a virtual linear nonholonomic constraint that is control invariant via the same feedback control. Virtual constraints are relations imposed on a control system that become invariant via feedback control, as opposed to physical constraints acting on the system. Virtual nonholonomic constraints, similarly to mechanical nonholonomic constraints, are a class of virtual constraints that depend on velocities rather than only on the configurations of the system.
\end{abstract}

%\begin{IEEEkeywords} Symmetry reduction, multi-agent systems, optimal control, collision and obstacle avoidance. \end{IEEEkeywords}

\IEEEpeerreviewmaketitle

\section{Introduction}
The concept of virtual holonomic constraints goes back to the work of \cite{Appell} and later i was examined more extensively in the thesis of \cite{Beghin:1922:thesis} as a  set of constraints that can be applied via the exertion of external forces. Virtual holonomic constraints have emerged as a useful tool for motion control in bipedal robots \cite{isidori:nonlinear}, \cite{westervelt2003hybrid}, \cite{westervelt2018feedback}. In \cite{CanudasdeWit:ontheconcept:2004} virtual holonomic constraints are used for constructing orbitally stable feedback laws for balancing and walking control. In subsequent years this approach was extended to motion planning for general robotic systems (see \cite{Consol:Constal:2015} for instance and references therein). 

%Moreover, they have been applied to haptic interface, namelly, to handle the operator so that it avoids obstacles in a virtual environment and to move along preferred directions \cite{Nonholonomic:Haptic}.

In \cite{Maggiore:Consolini:2013}, the authors studied conditions under which holonomic constraints can be made invariant and stabilizable by feedback, and sufficient conditions for the constraint dynamics to coincide with an Euler-Lagrange system with holonomic constraints are provided. Next, in \cite{Consol:Constal:2015} the authors presented virtual holonomic constraints in an affine connection geometric framework, where the trajectories of the closed-loop virtual constrained system are described as geodesics of a connection obtained from the mechanical system's Levi-Civita connection. %The former work is an extension of \cite{Maggiore:Consolini:2013} to the case of systems with underactuation degree greater than one and it examines the sufficient conditions for a constraint to be virtual (they call it feasible). Additionally, the authors define an induced connection whose geodesics describe the trajectories of the constraint dynamics and study conditions under which the metrizability of the affine connection guarantees that the constraint dynamics are Lagrangian.

Virtual nonholonomic constraints are a class of virtual constraints that depend on velocities rather than only on the configurations of the system. Those constraints were introduced in \cite{griffin2015nonholonomic} to design a velocity-based swing foot placement in bipedal robots (see also \cite{moran2021energy}). %Moreover, they have been applied to haptic interfaces, namely, to handle the operator so that it avoids obstacles in a virtual environment and to move along preferred directions \cite{Nonholonomic:Haptic}. \textbf{Tony: The last sentence  does not seem quite 
%right but I am not quite sure how to fix it}

In \cite{Simoes:linear:nonholonomic} we introduced the notion of virtual linear nonholonomic constraints in a geometric framework. More precisely, we introduce a controlled invariant distribution associated with an affine connection mechanical control system. We demonstrated the existence and uniqueness of a control law defining a virtual nonholonomic constraint and we characterized the trajectories of the closed-loop system as solutions of a mechanical system associated with an induced constrained connection. Moreover, we characterized when one can obtain nonholonomic dynamics from virtual nonholonomic constraints. We extended these results to affine and nonlinear virtual nonholonomic constraints in \cite{stratoglou2023bvirtual} and \cite{stratoglou2023geometry}, respectively, as well as for mechanical systems on Lie groups \cite{stratoglou2023virtual}. 

In this paper, we study the problem of stabilizing a system around  desired virtual nonholonomic constraints. In particular, we will prove that there is a  control law that drives the system into complying with the constraints. In addition, if the system is already complying with the constraints at some given point, then the control law coincides with the unique control law that ensures the existence of a virtual nonholonomic constraint as given in \cite{Simoes:linear:nonholonomic}.

The remainder of the paper is organized as follows. In Section \ref{sec2} we introduce nonholonomic constraints and nonholonomic systems in an affine connection framework. In Section \ref{sec3} we characterize virtual nonholonomic constraints as a controlled invariant distribution associated with an affine connection mechanical control system. We also give some examples and simulation results. We conclude the paper in Section \ref{conc} with some directions of future. 
%\section{Background}

\section{Nonholonomic mechanical systems}\label{sec2}
Let $Q$ be the configuration space of a mechanical system, a differentiable manifold with $\dim(Q)=n$, and with local coordinates denoted by $(q^i)$ for $i=1,\ldots,n$. Most nonholonomic systems have linear constraints in velocities, and these are the ones we will consider. Linear constraints on the
velocities (or Pfaffian constraints) are locally given by equations of the form \begin{equation}\label{NHconstraint}\phi(q^i, \dot{q}^i)=\mu_i(q)\dot{q}^i=0,\end{equation} depending in general, on the configurations and
velocities of the system (see \cite{bloch2003nonholonomic} for instance). From a geometric point of view, these constraints are defined by a regular distribution ${\mathcal D}$ on
$Q$ of constant rank $(n-m)$ such that the annihilator of ${\mathcal
D}$, denoted by $\mathcal {D}^o$, is locally given at each point of $Q$ by
${\mathcal D}^o_{q} = \operatorname{span}\left\{ \mu^{a}(q)=\mu_i^{a}dq^i \; ; 1 \leq a
\leq m \right\}$, where $\mu^{a}$ are linearly independent differential one-forms at each point of $Q$. We denote by $\Omega^{1}(Q)$ the set of differential one-forms on $Q$.

Next, consider  mechanical systems where the Lagrangian is of mechanical type, that is, mechanical systems with a dynamics described by a Lagrangian function $L:TQ\rightarrow\mathbb{R}$ which is defined by
\begin{equation}\label{mechanical:lagrangian}
    L(v_q)=\frac{1}{2}\mathcal{G}(v_q, v_q) - V(q),
\end{equation}
with $v_q\in T_qQ$, where $\mathcal{G}$ denotes a Riemannian metric on $Q$ representing the kinetic energy of the systems, $ T_qQ$, the tangent space at the point $q$ of $Q$, and
$V:Q\rightarrow\mathbb{R}$ is a (smooth) potential function, and also assume the Lagrangian system is subject to the nonholonomic constraints \eqref{NHconstraint}.

\begin{definition}\label{nonholonomicsystem}
A \textit{nonholonomic mechanical system} on a smooth manifold $Q$ is given
by the triple $(\mathcal{G}, V, \mathcal{D})$, where $\mathcal{G}$ is
a Riemannian metric on $Q,$ representing the kinetic energy of the
system, $V:Q\rightarrow\mathbb{R}$ is a smooth function representing the potential
energy, and $\mathcal{D}$ a regular distribution on $Q$
describing the nonholonomic constraints.
\end{definition}

Denote by $\tau_{\mathcal{D}}:\mathcal{D}\rightarrow Q$ the canonical
projection from $\mathcal{D}$ to $Q$, locally given by $\tau_{\mathcal{D}}(q^i, \dot{q}^i)=q^i$, and denote by 
$\Gamma(\tau_{\mathcal{D}})$ the set of sections of $\tau_{D}$, that is, $Z\in\Gamma(\tau_{\mathcal{D}})$ if $Z:Q\to\mathcal{D}$ satisfies $(\tau_{\mathcal{D}}\circ Z)(q)=q$. We also denote by $\mathfrak{X}(Q)$ the set of vector fields on $Q$. If $X, Y\in\mathfrak{X}(Q),$ then
$[X,Y]$ denotes the standard Lie bracket of vector fields.

In any Riemannian manifold, there is a unique connection $\nabla^{\mathcal{G}}:\mathfrak{X}(Q)\times \mathfrak{X}(Q) \rightarrow \mathfrak{X}(Q)$ called the \textit{Levi-Civita connection} satisfying the following two properties:
\begin{enumerate}
\item $[ X,Y]=\nabla_{X}^{\mathcal{G}}Y-\nabla_{Y}^{\mathcal{G}}X$ (symmetry)
\item $X(\mathcal{G}(Y,Z))=\mathcal{G}(\nabla_{X}^{\mathcal{G}}(Y,Z))+\mathcal{G}(Y,\nabla_{X}^{\mathcal{G}}Z)$ (compatibillity of the metric).
\end{enumerate}
The trajectories $q:I\rightarrow Q$ of a mechanical Lagrangian determined by a Lagrangian function as in \eqref{mechanical:lagrangian} satisfy the following equation
\begin{equation}\label{ELeq}
    \nabla_{\dot{q}}^{\mathcal{G}}\dot{q} + \text{grad}_{\mathcal{G}}V(q(t)) = 0.
\end{equation}
Observe that if the potential function vanishes, then the trajectories of the mechanical system are just the geodesics with respect to the connection $\nabla^{\mathcal{G}}$. Here, the vector field $\text{grad}_{\mathcal{G}}V\in\mathfrak{X}(Q)$ is characterized by $$\mathcal{G}(\text{grad}_{\mathcal{G}}V, X) = dV(X), \; \mbox{ for  every } X \in
\mathfrak{X}(Q).$$

Using the Riemannian metric $\mathcal{G}$ we can define two
complementary orthogonal projectors ${\mathcal P}\colon TQ\to {\mathcal D}$ and ${\mathcal Q}\colon TQ\to {\mathcal
D}^{\perp},$ with respect to the tangent bundle orthogonal decomposition $\mathcal{D}\oplus\mathcal{D}^{\perp}=TQ$.

In the presence of a constraint distribution $\mathcal{D}$, equation \eqref{ELeq} must be slightly modified as follows. Consider the \textit{nonholonomic connection} $\nabla^{nh}:\mathfrak{X}(Q)\times \mathfrak{X}(Q) \rightarrow \mathfrak{X}(Q)$ defined by (see \cite{bullo} for instance)
\begin{equation}\label{nh:connection}
    \nabla^{nh}_X Y =\nabla_{X}^{\mathcal{G}} Y + (\nabla_{X}^{\mathcal{G}} \mathcal{Q})(Y).
\end{equation}
Then, the trajectories for the nonholonomic mechanical system associated with the Lagrangian \eqref{mechanical:lagrangian} and the distribution $\mathcal{D}$ must satisfy the following equation
\begin{equation}\label{nonholonomic:mechanical:equation}
    \nabla^{nh}_{\dot{q}}\dot{q} + \mathcal{P}(\text{grad}_{\mathcal{G}}V(q(t))) = 0.
\end{equation}

\section{Virtual nonholonomic constraints}\label{sec3}

Given an external force $F^{0}:TQ\rightarrow T^{*}Q$ and a control force $F:TQ\times U \rightarrow T^{*}Q$ of the form
\begin{equation}
    F(q,\dot{q},u) = \sum_{a=1}^{m} u_{a}f^{a}(q)
\end{equation}
where $f^{a}\in \Omega^{1}(Q)$ with $m<n$, $U\subset\mathbb{R}^{m}$ the set of controls and $u_a\in\mathbb{R}$ with $1\leq a\leq m$ the control inputs, consider the associated mechanical control system of the form
\begin{equation}\label{mechanical:control:system}
    \nabla^{\mathcal{G}}_{\dot{q}(t)} \dot{q}(t) =Y^{0}(q(t),\dot{q}(t)) + u_{a}(t)Y^{a}(q(t)),
\end{equation}
with $Y^{0}=\sharp(F^{0})$ and the point-wise linearly independent vector fields $Y^{a}=\sharp(f^{a})$ the corresponding force vector fields.

Hence, $q$ is the trajectory of a vector field of the form
\begin{equation}\label{SODE}\Gamma(v_{q})=G(v_{q})+u_{a}(Y^{a})_{v_{q}}^{V},\end{equation}
where $G$ is the vector field determined by the unactuated forced mechanical system
\begin{equation*}
    \nabla^{\mathcal{G}}_{\dot{q}(t)} \dot{q}(t) =Y^{0}(q(t),\dot{q}(t))
\end{equation*}
and where the vertical lift of a vector field $X\in \mathfrak{X}(Q)$ to $TQ$ is defined by $$X_{v_{q}}^{V}=\left. \frac{d}{dt}\right|_{t=0} (v_{q} + t X(q)).$$

\begin{definition}
    The distribution $\mathcal{F}\subseteq TQ$ generated by the vector fields  $\sharp(f_{i})$ is called the \textit{input distribution} associated with the mechanical control system \eqref{mechanical:control:system}.
\end{definition}

Now we will define the concept of virtual nonholonomic constraint.

\begin{definition}
A \textit{virtual nonholonomic constraint} associated with the mechanical control system \eqref{mechanical:control:system} is a controlled invariant distribution $\mathcal{D}\subseteq TQ$ for that system, that is, %$\mathcal{D}\subseteq TQ$ is said to be controlled invariant for the controlled system \eqref{lagrangian:control:system} if
there exists a control function $\hat{u}:\mathcal{D}\rightarrow \mathbb{R}^{m}$ such that the solution of the closed-loop system satisfies $\phi_{t}(\mathcal{D})\subseteq \mathcal{D}$, where $\phi_{t}:TQ\rightarrow TQ$ denotes its flow.
\end{definition}

%In a sense, a linear virtual nonholonomic constraint is the same as a controlled invariant distribution.
%\todo{remark about a mopre general class of systems}
\begin{rem}A particular example of mechanical control system appearing in applications is determined by a mechanical Lagrangian function $L:TQ\rightarrow \mathbb{R}$. In this case, the control system is given by the controlled Euler-Lagrange equations, i.e., 
\begin{equation}\label{euler:lagrange:system}
    \frac{d}{dt}\left(\frac{\partial L}{\partial \dot{q}}\right)-\frac{\partial L}{\partial q}=F(q,\dot{q},u).
\end{equation}

If the curve $q:I\rightarrow Q$ is a solution of the controlled Euler Lagrange equations \eqref{euler:lagrange:system}, it may be shown that it satisfies the mechanical equation (see \cite{bullo} for instance)
\begin{equation}\label{lagrangian:control:system}
    \nabla^{\mathcal{G}}_{\dot{q}(t)} \dot{q}(t) + \text{grad}_{\mathcal{G}}V(q(t))=u_{a}(t)Y^{a}(q(t)).
\end{equation}
These are the equations of a mechanical control system as in \eqref{mechanical:control:system}, where the force field $Y^{0}$ is simply given by $-\text{grad}_{\mathcal{G}}V(q(t))$. In this case, we call \eqref{lagrangian:control:system} a controlled Lagrangian system.\hfill$\diamond$\end{rem}

The following theorem, given in \cite{Simoes:linear:nonholonomic}, establishes the conditions under which one can guarantee the existence and uniqueness of a control law making the distribution a virtual constraint.

\begin{theorem}\label{existence:theorem}
If the distribution $\mathcal{D}$ and the control input distribution $\mathcal{F}$ are transversal, then there exists a unique control function $u:\D\to\R^{m}$ making the distribution a virtual nonholonomic constraint associated with the mechanical control system \eqref{mechanical:control:system}.
\end{theorem}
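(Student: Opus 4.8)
The plan is to convert invariance of $\mathcal{D}$ into an infinitesimal tangency condition on $TQ$ and then solve a pointwise-linear equation for the control. I would view $\mathcal{D}$ (a rank $n-m$ distribution, with annihilator spanned by the $m$ one-forms $\mu^a$, $a=1,\dots,m$) as the submanifold of $TQ$ cut out by the functions $\psi^a(v_q)=\mu^a(v_q)=\mu^a_i(q)v^i$, and recall that the closed-loop vector field is $\Gamma$ as in \eqref{SODE} with $u=\hat u$, a priori defined on $\mathcal{D}$. Invariance $\phi_t(\mathcal{D})\subseteq\mathcal{D}$ is equivalent to $\Gamma$ being tangent to $\mathcal{D}$ as a submanifold of $TQ$, i.e.\ $\Gamma(\psi^a)=0$ at every point of $\mathcal{D}$ for each $a$. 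Since, along a solution $q(\cdot)$ of \eqref{mechanical:control:system} driven by this control, $\Gamma(\psi^a)$ evaluated at $\dot q(t)$ equals $\tfrac{d}{dt}\bigl(\mu^a(\dot q(t))\bigr)$, the whole problem reduces to analysing this time derivative.

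Next I would compute $\tfrac{d}{dt}\mu^a(\dot q)$: writing $\mu^a(\dot q)=\mathcal{G}\bigl(\sharp\mu^a,\dot q\bigr)$ and using compatibility of $\nabla^{\mathcal{G}}$ with $\mathcal{G}$ together with $\nabla^{\mathcal{G}}_{\dot q}\dot q=Y^0(q,\dot q)+\hat u_b\,Y^b(q)$ from \eqref{mechanical:control:system}, I would obtain
\[
\tfrac{d}{dt}\mu^a(\dot q)=\bigl(\nabla^{\mathcal{G}}_{\dot q}\mu^a\bigr)(\dot q)+\mu^a\bigl(Y^0(q,\dot q)\bigr)+\hat u_b\,\mu^a\bigl(Y^b(q)\bigr).
\]
Hence the tangency condition at a point $v\in\mathcal{D}_q$ becomes the linear system $M^{ab}(q)\,\hat u_b=-\bigl(\nabla^{\mathcal{G}}_v\mu^a\bigr)(v)-\mu^a\bigl(Y^0(q,v)\bigr)$, where $M^{ab}(q):=\mu^a\bigl(Y^b(q)\bigr)=\mathcal{G}\bigl(\sharp\mu^a,\sharp f^b\bigr)$ is an $m\times m$ matrix depending smoothly on $q$. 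Once $M(q)$ is known to be invertible for every $q$, this equation determines $\hat u$ uniquely as a smooth function on $\mathcal{D}$; substituting it back makes $\Gamma$ tangent to $\mathcal{D}$ by construction, so its flow preserves $\mathcal{D}$ and $\mathcal{D}$ is a virtual nonholonomic constraint, and uniqueness is automatic because any invariance-inducing control must satisfy the same system.

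The crux --- and the only place the transversality hypothesis enters --- is proving that $M(q)$ is invertible. I would argue by contradiction: if $\lambda=(\lambda_a)\neq 0$ satisfies $\lambda_a M^{ab}=0$ for all $b$, then the one-form $\omega:=\lambda_a\mu^a$ annihilates every generator $Y^b$ of $\mathcal{F}$, so $\omega\in\mathcal{F}^o_q$, while $\omega\in\mathcal{D}^o_q$ because it is a combination of the $\mu^a$; transversality $\mathcal{D}_q+\mathcal{F}_q=T_qQ$ then gives $\omega\in\mathcal{D}^o_q\cap\mathcal{F}^o_q=(\mathcal{D}_q+\mathcal{F}_q)^o=\{0\}$, and linear independence of the $\mu^a$ forces $\lambda=0$, a contradiction (note that transversality together with $\dim\mathcal{D}=n-m$ and $\dim\mathcal{F}=m$ in fact yields $\mathcal{D}\oplus\mathcal{F}=TQ$). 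The remaining items --- justifying the flow-invariance criterion on $\mathcal{D}\subseteq TQ$, carrying out the coordinate bookkeeping in the covariant term, and noting smoothness of $M^{-1}$ --- are routine; the main obstacle is isolating and establishing this invertibility statement.
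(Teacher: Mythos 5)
Your proposal is correct and follows essentially the same route as the paper: the paper states this theorem as imported from \cite{Simoes:linear:nonholonomic}, but the machinery it reproduces here (Lemma \ref{invertibility:lemma} and the tangency computation in the proof of Theorem \ref{main:theorem}) is exactly your reduction to the pointwise linear system $\mu^a(Y^b)\,\hat u_b = -(\text{drift terms})$ with invertibility of $C^{ab}=\mu^a(Y^b)$ supplied by transversality. The only cosmetic difference is that you establish invertibility via a left null vector and the annihilator identity $\mathcal{D}^o\cap\mathcal{F}^o=(\mathcal{D}+\mathcal{F})^o=\{0\}$, whereas the paper's Lemma \ref{invertibility:lemma} argues on columns using $\mathcal{D}\cap\mathcal{F}=\{0\}$; for an $m\times m$ matrix these are equivalent.
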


\section{Stabilization of virtual nonholonomic constraints}

In this Section we describe the main contributions of the paper. In particular, we will prove that there is a more general control law that drives the system into complying with the constraint. In addition, if the system  already complies with the constraint at some given point, we will show that then the control law coincides with the one given in Theorem \ref{existence:theorem}.

Suppose that $\D$ is a distribution on $Q$ defined as the set of points where
$$\mu^{b}_{i}(q) \dot{q}^{i} = 0, \text{ for } b=1,\ldots, m.$$
We will denote by $\hat{\mu}^{b}$ the fiberwise linear function $\hat{\mu}^{b}:TQ\to \R$ defined by $\hat{\mu}^{b}(q, \dot{q}) = \mu^{b}_{i}(q) \dot{q}^{i}$.

Before stating the main theorem, we will need a Lemma describing the interplay between the distribution $\D$ and a distribution $\mathcal{F}$ that is transversal to it.

\begin{lemma}\label{invertibility:lemma}
    Let $\D$ be the distribution defined by the one-forms $\{\mu^{b}\}$ and let $\mathcal{F}$ be a transversal distribution generated by the vector  fields $\{Y^{b}\}$. The matrix
    $$C^{ab}=(Y^{a})^{V}(\hat{\mu}^{b})$$
    is invertible with smooth inverse.
\end{lemma}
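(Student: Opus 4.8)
The plan is to compute the matrix $C^{ab}$ explicitly in local coordinates and recognize it as a product of a matrix whose entries are $\mu^b_i Y^a_j$ with a projection encoding transversality. Recall that for a vector field $X = X^i \partial/\partial q^i$ on $Q$, its vertical lift to $TQ$ is $X^V = X^i \partial/\partial \dot q^i$. Since $\hat\mu^b(q,\dot q) = \mu^b_i(q)\,\dot q^i$ depends on $\dot q$ only through the linear term, we get
$$
(Y^a)^V(\hat\mu^b) = Y^a_j(q)\,\frac{\partial}{\partial \dot q^j}\big(\mu^b_i(q)\,\dot q^i\big) = \mu^b_i(q)\,Y^a_i(q).
$$
Thus $C^{ab} = \mu^b_i\, Y^a_i = \langle \mu^b, Y^a\rangle$, the natural pairing of the one-form $\mu^b$ with the vector field $Y^a$, evaluated pointwise. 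In particular $C^{ab}$ is a smooth matrix-valued function on $Q$ (not just on $\D$), since both $\mu^b$ and $Y^a$ are smooth.

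Next I would show $C(q)$ is invertible at every $q\in Q$. Suppose for some $q$ there is a nonzero vector $\xi = (\xi_a) \in \R^m$ with $\xi_a C^{ab}(q) = 0$ for all $b$, i.e. $\mu^b_i(q)\big(\xi_a Y^a_i(q)\big) = 0$ for all $b$. Set $v := \xi_a Y^a(q) \in T_qQ$. Then $\mu^b(v) = 0$ for every $b$, so $v \in \D_q$ by definition of $\D$ as the common kernel of the $\mu^b$. On the other hand $v$ is a linear combination of the $Y^a(q)$, which span $\mathcal{F}_q$, so $v \in \mathcal{F}_q$. By transversality of $\D$ and $\mathcal{F}$ — which, since $\operatorname{rank}\D = n-m$ and $\operatorname{rank}\mathcal{F}=m$ forces $\D_q \cap \mathcal{F}_q = \{0\}$ — we conclude $v = 0$. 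But the $Y^a(q)$ are pointwise linearly independent (stated with the definition of the control force), so $\xi = 0$, a contradiction. Hence $C(q)$ is invertible for every $q$.

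Finally, smoothness of the inverse follows from Cramer's rule: the entries of $C(q)^{-1}$ are polynomial expressions in the entries of $C(q)$ divided by $\det C(q)$, which is a nowhere-vanishing smooth function on $Q$; hence $q \mapsto C(q)^{-1}$ is smooth, and by restriction it is a smooth function on $\D$ as well. I do not expect a serious obstacle here; the only point requiring a little care is the dimension bookkeeping that turns "transversal" into "$\D_q\cap\mathcal F_q=\{0\}$," namely that $\operatorname{rank}\D + \operatorname{rank}\mathcal F = n$ so transversality is in fact a direct-sum decomposition $T_qQ = \D_q\oplus\mathcal F_q$, and that the force vector fields $Y^a$ are assumed pointwise linearly independent so that the spanning map $\R^m\to\mathcal F_q$ is an isomorphism.
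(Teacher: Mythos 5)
Your proposal is correct and follows essentially the same route as the paper: identify $C^{ab}=(Y^a)^V(\hat\mu^b)$ with the pointwise pairing $\mu^b(Y^a)$, take a null combination $\xi_a Y^a$, observe it lies in $\D_q\cap\mathcal F_q=\{0\}$ by transversality, and conclude $\xi=0$ from the pointwise linear independence of the $Y^a$. Your added details (the explicit vertical-lift computation, the rank bookkeeping behind $\D_q\cap\mathcal F_q=\{0\}$, and Cramer's rule for smoothness of the inverse) only make explicit what the paper leaves implicit.
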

\begin{proof}
    It is easy to see that
    $$(Y^{a})^{V}(\hat{\mu}^{b}) = \mu^{b}(Y^{a}).$$
    $C^{ab}$ has full rank, since its columns are linearly independent. In fact suppose that
    \begin{equation*}
        c_{1}\begin{bmatrix} \mu^{1}(Y^{1}) \\
        \vdots \\
        \mu^{m}(Y^{1}) \end{bmatrix} + \cdots + c_{m}\begin{bmatrix} \mu^{1}(Y^{m}) \\
        \vdots \\
        \mu^{m}(Y^{m}) \end{bmatrix}= 0,
    \end{equation*}
    which is equivalent to
    \begin{equation*}
        \begin{bmatrix} \mu^{1}(c_{1}Y^{1}+\cdots + c_{m}Y^{m}) \\
        \vdots \\
        \mu^{m}(c_{1}Y^{1}+\cdots + c_{m}Y^{m}) \end{bmatrix}=0.
    \end{equation*}
    By transversality, we have $\mathcal{D}\cap \mathcal{F} = \{0\}$ which implies that $c_{1}Y^{1}+\cdots + c_{m}Y^{m}=0$. Since $\{Y_{i}\}$ are linearly independent we conclude that $c_{1}=\cdots=c_{m}=0$ and $C^{ab}$ has full rank. But, since $C^{ab}$ is an $m\times m$ matrix, and $\mathcal{D}$ is a regular distribution, it must be invertible and its inverse must be smooth.
\end{proof}

%Next, we present the main result of the paper.

%\textcolor{magenta}{Tony: did we assume above that the $Y^i$ are linearly independent? Is there a condition on the $\mu^i$?}
\begin{theorem}\label{main:theorem}
    Given a mechanical control system of the form \eqref{mechanical:control:system} and a virtual constraint determined by the distribution $\D$ defined by the annihilation of the fiberwise linear functions $\hat{\mu}^{b}:TQ\rightarrow \R$. Suppose $\mathcal{F}$ is a transversal distribution to $\D$ generated by the vector field $\{Y^{a}\}$. 
    
    Then, the control law $u^{*}:TQ \to \R^{m}$ given by the expression
    $$u_{a}^{*} = C_{a b}(-\hat{\mu}^{b}-G(\hat{\mu}^{b})),$$
    where $G$ is the geodesic vector field and $C_{ab}$ is the inverse matrix of $C^{ab}=\mu^{b}(Y^{a})$, satisfies
    \begin{enumerate}
        \item $u^{*}|_{\D}$ is the unique control law whose existence is guaranteed by Theorem \ref{existence:theorem}.
        \item $\hat{\mu}^{b}\to 0$ exponentially fast along the system trajectories, for $b=1,\ldots, m$.
    \end{enumerate}
\end{theorem}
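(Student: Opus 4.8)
The plan is to compute the time derivative of each $\hat\mu^b$ along the closed-loop trajectories and show it equals $-\hat\mu^b$, which immediately gives exponential decay; then specialize to $\D$ to recover uniqueness.

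First I would compute $\dot{\hat\mu}^b$ along a trajectory of the SODE $\Gamma$ in \eqref{SODE} with control $u^*$. Since $\hat\mu^b:TQ\to\R$, its derivative along $\Gamma$ is $\Gamma(\hat\mu^b)$. Splitting $\Gamma = G + u_a (Y^a)^V$ and using $\Gamma(v_q)=G(v_q)+u_a(Y^a)^V_{v_q}$, I get
\begin{equation*}
\dot{\hat\mu}^b = G(\hat\mu^b) + u_a^* (Y^a)^V(\hat\mu^b) = G(\hat\mu^b) + u_a^* C^{ab},
\end{equation*}
where I have used the identity $(Y^a)^V(\hat\mu^b)=\mu^b(Y^a)=C^{ab}$ established in the proof of Lemma \ref{invertibility:lemma}. (One must be slightly careful with index placement: $C^{ab}$ is the matrix of Lemma \ref{invertibility:lemma} and $C_{ab}$ its inverse, so $C_{ab}C^{bc}=\delta_a^c$.) Substituting the proposed control law $u_a^* = C_{ab}(-\hat\mu^b - G(\hat\mu^b))$ and contracting with $C^{ab}$, the $G(\hat\mu^b)$ terms cancel and one is left with $\dot{\hat\mu}^b = -\hat\mu^b$ for each $b=1,\dots,m$. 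Hence $\hat\mu^b(t) = e^{-t}\hat\mu^b(0)\to 0$ exponentially fast, which is claim (2). I should double-check the contraction bookkeeping here, because that is where an index slip would silently break the cancellation — this is the one place the "routine" computation actually has to come out exactly right.

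For claim (1), restrict $u^*$ to $\D$. On $\D$ we have $\hat\mu^b = 0$, so the control law reduces to $u_a^*|_\D = -C_{ab}\,G(\hat\mu^b)$. Along any trajectory starting in $\D$, the computation above gives $\dot{\hat\mu}^b = -\hat\mu^b = 0$, so $\hat\mu^b$ stays zero and the trajectory remains in $\D$; thus $u^*|_\D$ renders $\D$ invariant, i.e. it is a virtual nonholonomic constraint in the sense of the definition. Since Theorem \ref{existence:theorem} (applicable because $\D$ and $\mathcal F$ are transversal) guarantees that such a control law is unique, $u^*|_\D$ must coincide with it. Alternatively, and perhaps more transparently, one can observe that the invariance condition $\dot{\hat\mu}^b|_\D = 0$ is exactly $G(\hat\mu^b) + u_a C^{ab} = 0$ on $\D$, whose unique solution (by invertibility of $C^{ab}$ from Lemma \ref{invertibility:lemma}) is precisely $u_a = -C_{ab}G(\hat\mu^b) = u_a^*|_\D$; this simultaneously reproves existence-uniqueness and identifies the restriction.

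The main obstacle is conceptual rather than technical: one must justify that $\dot{\hat\mu}^b = \Gamma(\hat\mu^b)$ and correctly evaluate $G(\hat\mu^b)$, i.e. the derivative of the constraint function along the \emph{geodesic} spray (this is where $\nabla^{\mathcal G}_{\dot q}\dot q$ and the Christoffel symbols of the metric enter, since $G(\hat\mu^b) = \dot\mu^b_i \dot q^i + \mu^b_i \ddot q^i$ with $\ddot q^i$ supplied by the unactuated equation $\nabla^{\mathcal G}_{\dot q}\dot q = Y^0$). Making this intrinsic — rather than leaning on coordinates — and confirming it is well-defined as a function on $TQ$ is the part that needs the most care; everything after that is linear algebra driven by Lemma \ref{invertibility:lemma}.
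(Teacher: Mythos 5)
Your proposal is correct and follows essentially the same route as the paper: both hinge on the identity $(Y^a)^V(\hat\mu^b)=\mu^b(Y^a)=C^{ab}$ from Lemma \ref{invertibility:lemma}, force the closed-loop identity $\Gamma(\hat\mu^b)=-\hat\mu^b$ to obtain exponential decay, and then deduce tangency to $\D$ (where $\hat\mu^b=0$) plus the uniqueness in Theorem \ref{existence:theorem} to identify $u^*|_{\D}$. The only difference is presentational --- the paper derives $u^*$ from the requirement $\Gamma(\hat\mu^b)=-\hat\mu^b$ while you verify that the given $u^*$ satisfies it --- and your index bookkeeping and the final remark about evaluating $G(\hat\mu^b)$ are consistent with what the paper does implicitly.
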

%\textcolor{magenta}{Tony:  Shouldn't the theorem say  $\hat{\mu}^{b}\to 0$ exponentially fast along the system trajectories or something like that?}
\begin{proof}
    The proof is constructive. We will construct a control law satisfying the previous requirements. Notice that if $q$ is a trajectory of the controlled system \eqref{mechanical:control:system}, then it is a trajectory of a vector field $\Gamma$ with the form \eqref{SODE}, where $G$ is now the geodesic vector field associated to the Riemannian metric $\mathcal{G}$.

    We would like to determine if there exists a control law $u^{*}$ for which the associated closed-loop system satisfies
    $$\Gamma(\hat{\mu}^{b}) = - \hat{\mu}^{b}, \text{ for } b=1, \ldots, m.$$

    This is equivalent to the equations
    $$u_{a}\left[(Y^{a})^{V}(\hat{\mu}^{b})\right] = - \hat{\mu}^{b} - G(\hat{\mu}^{b}).$$
    We remark that the matrix $C^{ab}=(Y^{a})^{V}(\hat{\mu}^{b})=\mu^{b}(Y^{a})$ is invertible by Lemma \ref{invertibility:lemma}. Therefore, we may solve the previous equation for the controls obtaining
    $$u_{a}^{*} = C_{ab}(- \hat{\mu}^{b} - G(\hat{\mu}^{b})),$$
    where $C_{ab}$ is the inverse of $C^{ab}$. Finally, the closed-loop system fulfills the following two requirements:
    \begin{enumerate}
        \item At points $v_{q}\in \D$, the closed-loop system is tangent to $\D$ since
        \begin{equation*}
            \begin{split}
                \Gamma(v_{q})(\hat{\mu}^{b}) & = G(v_{q})(\hat{\mu}^{b}) + u_{a}^{*}(v_{q})(Y^{a})^{V}(v_{q})(\hat{\mu}^{b}) \\
                & = G(v_{q})(\hat{\mu}^{b}) + u_{a}^{*}(v_{q})\mu^{b}(Y^{a}) \\
                & = G(v_{q})(\hat{\mu}^{b}) + C^{ab}u_{a}^{*}(v_{q}).
            \end{split}
        \end{equation*}
        Inserting the value of $u^{*}_{a}$, we deduce
        \begin{equation*}
            \begin{split}
                \Gamma(v_{q})(\hat{\mu}^{b}) & = G(v_{q})(\hat{\mu}^{b}) + (- \hat{\mu}^{b}(v_{q}) - G(v_{q})(\hat{\mu}^{b})) \\
                & = - \hat{\mu}^{b}(v_{q}).
            \end{split}
        \end{equation*}
        But, the right-hand side of the last equation vanishes because at $v_{q}\in\D$, we have that $\mu^{b}(v_{q})=0$ for $b=1, \ldots, m$. Thus, $\Gamma$ must be tangent to $\D$. By uniqueness of the control law given in Theorem $\ref{existence:theorem}$, we must have that $u^{*}|_{\D}$ is the unique control law making $\D$ a virtual constraint.
        \item By construction $\Gamma(\hat{\mu}^{b}) = - \hat{\mu}^{b}, \text{ for } b=1, \ldots, m.$ Consequently, if $c:[a,b]\to Q$ is a trajectory of the closed-loop system, then the function $h^{b}(t):=\hat{\mu}^{b}(c(t),\dot{c}(t))$ satisfies the differential equation $\dot{h}^{b}=-h^{b}$, which implies that $h^{b}(t)=h^{b}(0)e^{-t}$ for each $t\in [a,b]$ and $b=1,\ldots,m$. Therefore, each $h^{b}$ converges exponentially fast to $0$.
    \end{enumerate}
\end{proof}

\begin{rem}
    In the case that $\dim{\mathcal{F}}=1$, the expression for the control law $u^{*}$
    simplifies to
    $$u^{*} = - \frac{\hat{\mu}}{\mu(Y)} - \frac{G(\hat{\mu})}{\mu(Y)}.$$
\end{rem}

\begin{rem}
     Maggiore and Consolini \cite{Maggiore:Consolini:2013} provide a stabilization control law using feedback linearization in the case of 1-dimensional virtual holonomic constraint, i.e., in the case that the constraint manifold is a curve on the configuration space. We may deduce a similar formula in the case that our virtual constraint $\D$ is a one-dimensional distribution and, therefore, it is integrable.
     
     Let us suppose, for convenience, that $(q_{1}, \ldots, q_{n})$ is a local coordinate chart on the configuration manifold $Q$ for which the virtual holonomic constraint is the tangent space of the curve defined by
     $$\gamma(q_{n})=(\phi_{1}(q_{n}), \ldots, \phi_{n-1}(q_{n}), q_{n}),$$
     where $\phi_{i}:I_{i}\subseteq \R\to \R$ for each $i=1,\ldots, n-1$ and $I_{i}$ is an open interval. Then, $\D$ is the distribution spanned by the tangent vector
     $$\D_{\gamma(q_{n})} = \left\langle (\phi_{1}^{'}(q_{n}), \ldots, \phi_{n-1}^{'}(q_{n}), 1) \right\rangle.$$
     Therefore, $\D_{\gamma(q_{n})}$ is the set of vectors $(v_{1}, \ldots, v_{n})\in T_{\gamma(q_{n})} Q$ defined by the $n-1$ constraints
     $$\phi_{1}^{'}(q_{n})dq^{n}-dq^{1} = 0, \ldots, \phi_{n-1}^{'}(q_{n})dq^{n}-dq^{n-1} = 0.$$
     Let $\mu^{i} = \phi_{i}^{'}(q_{n})dq^{n}-dq^{i}$ for each $i=1,\ldots, n-1$.

     Next, suppose we have a mechanical control system of type \eqref{mechanical:control:system}, where the input distribution $\mathcal{F}$ and $\D$ satisfy the transversality assumption. Then, 
     $$C^{ab}=\phi_{b}^{'}(q_{n})Y^{a}_{n} - Y^{a}_{b}$$
     and $\hat{\mu}^{b}= \phi_{b}^{'}(q_{n})\dot{q}^{n} -  \dot{q}^{b}$. Hence,
     $$u_{a}^{*} = (\phi_{b}^{'}(q_{n})Y^{a}_{n} - Y^{a}_{b})^{-1} (- \phi_{b}^{'}(q_{n})\dot{q}^{n} + \dot{q}^{b} - G(\phi_{b}^{'}(q_{n}) \dot{q}^{n} - \dot{q}^{b})).$$
     Now, it is a direct computation to show that our control law coincides with the control law appearing in \cite{Maggiore:Consolini:2013} by considering the explicit form of the geodesic vector field $G$ that they have and setting $k_{1}=0$ and $k_{2}=1$ in their formula.
     
     We also remark that, the fact that our control law only coincides with the one in \cite{Maggiore:Consolini:2013} after $k_{1}=0$ is related with our original goal. We are not stabilizing the configuration space constraints but only the velocity (nonholonomic) constraints.
\end{rem}

%\subsection{Examples}\label{examples}

\begin{exam}\label{nh:robot}
    Consider the nonholonomic system in $SE(2)$ whose Lagrangian function is $$L(x,y,\theta,\dot{x},\dot{y},\dot{\theta})=\frac{m}{2}(\dot{x}^{2}+\dot{y}^{2})+\frac{I\dot{\theta}^{2}}{2}$$ and the control force $$F(x,y,\theta,\dot{x},\dot{y},\dot{\theta},u)=u(\sin \theta dx-\cos \theta dy).$$
    The corresponding controlled Lagrangian system is
    \begin{equation*}
        m\ddot{x}=u \sin\theta, \quad m\ddot{y}=-u \cos\theta, \quad I\ddot{\theta}=0.
    \end{equation*}
    
    The input distribution $\mathcal{F}$ is generated just by one vector field $$Y=\frac{\sin \theta}{m}\frac{\partial}{\partial x}-\frac{\cos \theta}{m}\frac{\partial}{\partial y},$$
    while the virtual nonholonomic constraint is the distribution $\mathcal{D}$ determined by $\dot{x}\sin \theta - \dot{y}\cos \theta = 0$.
    
    On one hand, the control law
    $$\hat{u}(x,y,\theta,\dot{x},\dot{y},\dot{\theta})=-m\dot{\theta}(\cos\theta \dot{x} +\sin \theta \dot{y})$$
    is the unique function from Theorem \ref{existence:theorem} that makes the distribution invariant under the closed-loop system. Note that the nonholonomic system under study is the Chaplygin sleigh.
   
    On the other hand, the control law 
    $$u^{*}(x,y,\theta,\dot{x},\dot{y},\dot{\theta})=-m\dot{\theta}(\cos\theta \dot{x} +\sin \theta \dot{y}) - m (\dot{x}\sin \theta - \dot{y} \cos \theta)$$
    coincides with $\hat{u}$ on values of $\D$ and the constraint function $\hat{\mu}=\dot{x}\sin\theta - \dot{y}\cos\theta$ converges exponentially fast to zero. Indeed, differentiating the constraint function $\hat{\mu}$ along trajectories of the controlled system and using the dynamics, we deduce that
    $$\dot{\hat{\mu}} = \frac{u}{m} + \dot{x}\dot{\theta} \cos \theta + \dot{y}\dot{\theta}\sin \theta.$$

    Now, substituting $u$ by the control law $u^{*}$, we obtain that $\dot{\hat{\mu}}=-\hat{\mu}$ and $\hat{\mu}$ converges exponential fast to $0$ along trajectories of the closed-loop system. In Figure \ref{fig:1} we plot the projection of a trajectory of the closed-loop system into the plane $xy$  and, in Figure \ref{fig:2}, we plot the value of the constraint function $\hat{\mu}(t)$ along the same trajectory. We have considered the system's mass $m=2$ and its moment of inertia $I=1.5$. The total simulation time was $50$ seconds, with a time step of $0.01$ seconds, resulting in $5000$ steps. The initial conditions were $x_0 = 1, y_0 = 1, \theta_0 = \pi$ for position, and $\dot{x}_{0} = 0.5, \dot{y}_{0} = 8, \dot{\theta}_{0} = 0.1$ for velocity.

%    \textcolor{magenta}{Tony: this example looks good to me.}

    \begin{figure}[htb!]
        \centering
        \includegraphics[width=0.9\linewidth]{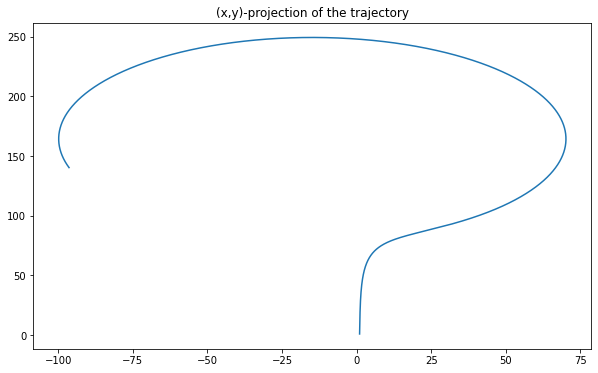}
        \caption{The projection of a trajectory of the closed-loop system into the plane $xy$ in Example \ref{nh:robot}.}
        \label{fig:1}
    \end{figure}

    \begin{figure}[htb!]
        \centering
        \includegraphics[width=0.9\linewidth]{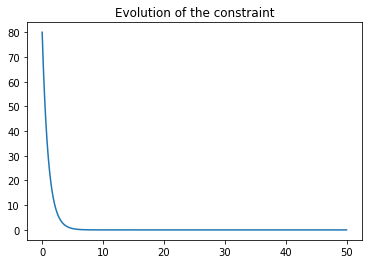}
        \caption{The constraint function $\hat{\mu}(t)$ along the same trajectory in Example \ref{nh:robot}.}
        \label{fig:2}
    \end{figure}
\end{exam}

\begin{exam}
   In this example we consider a nonholonomic system as in Example \ref{nh:robot} but with non-orthogonal control force. Consider in $SE(2)\cong \mathbb{R}^{2}\times \mathbb{S}^{1}$ the controlled mechanical system whose Lagrangian function is the same as Example \ref{nh:robot} but with the control force
    $$F(x,y,\theta,\dot{x},\dot{y},\dot{\theta},u)=u(\sin \theta dx-\cos \theta dy +d\theta).$$
    That is, the corresponding force vector field is not orthogonal to the distribution $\D$. The corresponding controlled Lagrangian system is
\begin{equation*}
    m\ddot{x}=u \sin\theta, \quad m\ddot{y}=-u \cos\theta, \quad I\ddot{\theta}=u
\end{equation*}
and, as we will show, it has the following virtual nonholonomic constraint
$$\sin\theta \dot{x} - \cos\theta \dot{y}=0.$$
The input distribution $\mathcal{F}$ is generated just by one vector field $$Y=\frac{\sin \theta}{m}\frac{\partial}{\partial x}-\frac{\cos \theta}{m}\frac{\partial}{\partial y}+\frac{1}{I}\frac{\partial}{\partial \theta},$$
while the virtual nonholonomic constraint is the distribution $\mathcal{D}$ defined as the set of tangent vectors $v_{q}\in T_{q}Q$  where $\mu(q)(v)=0,$ with $\mu=\sin\theta dx - cos\theta dy$. Thus, we may write it as
$$\mathcal{D}=\hbox{span}\Big{\{} X_{1}=\cos \theta\frac{\partial}{\partial x} + \sin\theta \frac{\partial}{\partial y},\, X_{2}=\frac{\partial}{\partial \theta}\Big{ \}}.$$
The control law
$$\hat{u}(x,y,\theta,\dot{x},\dot{y},\dot{\theta})=-m\dot{\theta}(\cos\theta \dot{x} +\sin \theta \dot{y})$$
makes the distribution invariant under the closed-loop system.

In addition, the control law
$$u^{*}(x,y,\theta,\dot{x},\dot{y},\dot{\theta})=-m\dot{\theta}(\cos\theta \dot{x} +\sin \theta \dot{y}) - m (\dot{x}\sin \theta - \dot{y} \cos \theta)$$
    coincides with $\hat{u}$ on values of $\D$ and the constraint function $h=\dot{x}\sin\theta - \dot{y}\cos\theta$ converges exponentially fast to zero. The conclusion follows from the same argument as in the Example \ref{nh:robot}, since the dynamics on the variable $\theta$ does not intervene in the derivative of the constraint function $h$. Therefore, one can prove that along trajectories of the closed-loop system one has also $\dot{h}=-h$.
\end{exam}

\begin{exam}\label{disk:example}
    Consider the controlled vertical rolling coin whose configuration space is $\mathbb{R}^{2}\times \mathbb{S}^{1} \times \mathbb{S}^{1}$. The Lagrangian function is given by $$L(x,y,\theta,\varphi,\dot{x},\dot{y},\dot{\theta},\dot{\varphi})=\frac{m}{2}(\dot{x}^{2}+\dot{y}^{2})+\frac{I\dot{\theta}^{2}}{2} + \frac{J\dot{\varphi}^{2}}{2},$$
    where $m$ is the mass and $I, J$ are the moments of inertia. Consider also the control force \begin{align*}F(x,y,\theta,\varphi,\dot{x},\dot{y},\dot{\theta},\dot{\varphi},u)=&u_{1}(dx-\cos \varphi d\theta + d\varphi)\\ &+ u_{2}(dy-\sin \varphi d\theta + d\varphi).\end{align*}
The controlled Lagrangian system is then
\begin{equation*}
    m\ddot{x}=u_{1},\, m\ddot{y}=u_{2},\, I\ddot{\theta}=-u_{1}\cos \varphi - u_{2} \sin \varphi,\, J\ddot{\varphi}=u_{1} + u_{2}.
\end{equation*}
The virtual nonholonomic constraints associated to this system are defined by the following equations
\begin{equation*}
    \dot{x}=\dot{\theta}\cos \varphi, \quad \dot{y} = \dot{\theta}\sin\varphi.
\end{equation*}
Therefore, the input distribution $\mathcal{F}$ is the set
\begin{equation*}
    \begin{split}
        \mathcal{F}=\hbox{span}\Big{\{} Y^{1} = & \frac{1}{m}\frac{\partial}{\partial x}-\frac{\cos \varphi}{I}\frac{\partial}{\partial \theta}+\frac{1}{J}\frac{\partial}{\partial \varphi}, \\
        & Y^{2}= \frac{1}{m}\frac{\partial}{\partial y}-\frac{\sin \varphi}{I}\frac{\partial}{\partial \theta}+\frac{1}{J}\frac{\partial}{\partial \varphi} \Big{\}}, %\rangle
    \end{split}
\end{equation*} and the constraint distribution $\mathcal{D}$ is defined by the 1-forms $\mu^{1}=dx - \cos \varphi d\theta$ and $\mu^{2}=dy - \sin \varphi d\theta$, thus
$$\mathcal{D}=\Big{\{} X_{1}=\cos \varphi\frac{\partial}{\partial x} + \sin\varphi \frac{\partial}{\partial y} + \frac{\partial}{\partial \theta},\, X_{2}= \frac{\partial}{\partial \varphi}\Big{\}}.$$
We may verify that $\mathcal{D}$ is in fact controlled invariant under the control law
\begin{equation*}
        \hat{u}_{1}= -m\dot{\theta}\dot{\varphi}\sin \varphi, \quad
        \hat{u}_{2}=  m\dot{\theta}\dot{\varphi} \cos \varphi.
\end{equation*}

The matrix $C^{ab}$ appearing in Theorem \ref{main:theorem} is given by
$$C^{ab}=\begin{bmatrix}
    \frac{1}{m}+\frac{cos^{2} \varphi}{I} & \frac{\cos\varphi\sin\varphi}{I} \\
    \frac{\cos\varphi\sin\varphi}{I} & \frac{1}{m}+\frac{sin^{2} \varphi}{I}
\end{bmatrix}$$
and its inverse is
$$C_{ab}= \frac{m}{I+m}\begin{bmatrix}
    I+m \sin^{2} \varphi & -m \cos\varphi\sin\varphi \\
    -m \cos\varphi\sin\varphi & I+m \cos^{2} \varphi
\end{bmatrix}$$

The stabilizing control law from Theorem \ref{main:theorem} is then
$$\begin{bmatrix}
    u_{1}^{*} \\
    u_{2}^{*}
\end{bmatrix} = \frac{m}{I+m}\begin{bmatrix}
    I+m \sin^{2} \varphi & -m \cos\varphi\sin\varphi \\
    -m \cos\varphi\sin\varphi & I+m \cos^{2} \varphi
\end{bmatrix} \begin{bmatrix}
    c_{1} \\
    c_{2}
\end{bmatrix}$$
where
\begin{equation*}
    \begin{split}
        c_{1} & = -\dot{x}+\dot{\theta} \cos \varphi - \dot{\varphi}\dot{\theta}\sin\varphi
        \\
        c_{2} & = - \dot{y} + \dot{\theta} \sin \varphi + \dot{\varphi}\dot{\theta}\cos\varphi.
    \end{split}
\end{equation*}

Therefore, simplifying the matrix calculations
\begin{equation*}
    \begin{split}
        u_{1}^{*} & = \frac{m^{2}}{I+m} \left( \frac{I}{m}(-\dot{x}+\dot{\theta} \cos \varphi - \dot{\varphi}\dot{\theta}\sin\varphi) \right. \\
        & \left. + \dot{y}\cos\varphi\sin\varphi - \dot{\varphi}\dot{\theta}\sin\varphi -\dot{x}\sin^{2} \varphi \right)\\
        u_{2}^{*} & = \frac{m^{2}}{I+m}\left(\dot{x}\cos\varphi\sin\varphi + \dot{\varphi}\dot{\theta}\cos\varphi - \dot{y}\cos^{2} \varphi\right. \\
        & \left. + \frac{I}{m}(- \dot{y} + \dot{\theta} \sin \varphi + \dot{\varphi}\dot{\theta}\cos\varphi) \right).
    \end{split}
\end{equation*}

\begin{figure}[htb!]
        \centering
        \includegraphics[width=1\linewidth]{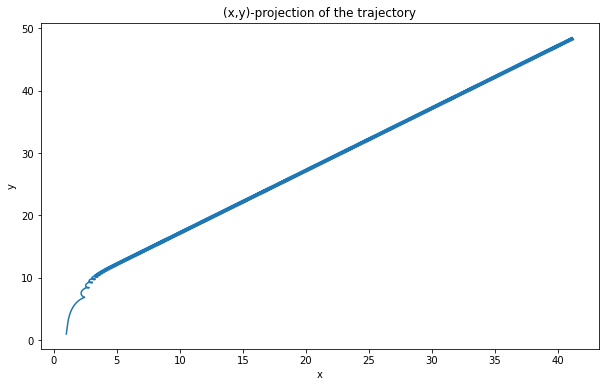}
        \caption{Projection of a trajectory of the closed-loop system into the plane $xy$. Example \ref{disk:example}.}
        \label{fig:3}
    \end{figure}

    \begin{figure}[htb!]
        \centering
        \begin{subfigure}[b]{0.49\textwidth}
         \centering
         \includegraphics[width=\textwidth]{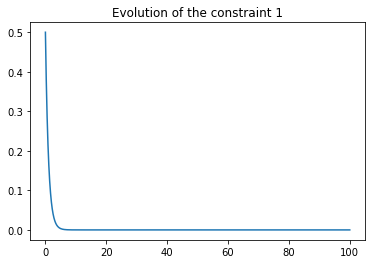}
         \caption{Constraint $\hat{\mu}_{1}(t)$.}
         \label{fig:4}
     \end{subfigure}
     \hfill
     \begin{subfigure}[b]{0.49\textwidth}
         \centering
         \includegraphics[width=\textwidth]{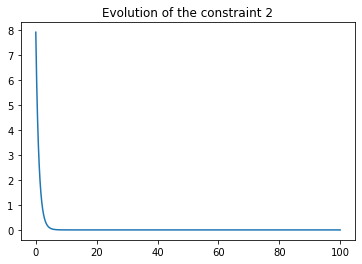}
         \caption{Constraint $\hat{\mu}_{2}(t)$.}
         \label{fig:5}
     \end{subfigure}\caption{Constraint functions in Example \ref{disk:example}.}
    \end{figure}

In Figure \ref{fig:3} we show the projection of a trajectory of the closed-loop system into the plane $xy$.
In Figures \ref{fig:4} and \ref{fig:5} we show the values of the constraint functions along the same trajectory. We have considered the system's mass $m=2$ and their moments of inertia $I=1.5$ and $J=1.1$. The total simulation time was again $100$ seconds with a time step of $0.01$ seconds implying a total of $10000$ steps. The initial conditions were $x_0 = 1, y_0 = 1, \theta_0 = \pi$ and $\varphi_{0}=\frac{\pi}{2}$ for position, and $\dot{x}_{0} = 0.5, \dot{y}_{0} = 8, \dot{\theta}_{0} = 0.1$ and $\dot{\varphi}_{0} = -0.1$ for velocity.

\end{exam}

%\begin{exam}
 %   A robotic manipulator?
%\end{exam}

\section{Conclusions and Future Work}\label{conc}
 In this paper, we investigated sufficient conditions and constructed a control law under which a mechanical control system converges exponentially fast to a virtual linear nonholonomic constraint that is control invariant via the same feedback control. We showed some examples and simulations to validate the mathematical result. 
 
 Some related questions to the main result of the paper have to be addressed in a future work. An interesting question is to understand the qualitative behavior of the energy. In particular, we have checked numerically that in general the energy stabilizes around a specific value or it converges to a bounded set of values. Understanding the limit value of the energy might give a clue on the nature of the closed-loop dynamics. In addition, we would like to characterize it geometrically as in \cite{stratoglou2023geometry}.

\bibliographystyle{IEEEtran}
\bibliography{autosam}

\end{document}